\theoremstyle{thmstyleone}%
\newtheorem{theorem}{Theorem}
\theoremstyle{thmstyletwo}%
\newtheorem{example}{Example}%
\newtheorem{lemma}[theorem]{Lemma}%
\theoremstyle{thmstylethree}%
\newtheorem{definition}{Definition}%
\begin{document}

\title[Article Title]{Optimal Filtering for Interacting Particle Systems}

\author[1]{\fnm{Andrey A.} \sur{Dorogovtsev}}

\author[2]{\fnm{Yuecai} \sur{Han}}

\author[1,2]{\fnm{Kateryna } \sur{Hlyniana}}

\author*[2]{\fnm{Yuhang} \sur{Li}}
\equalcont{These authors contributed equally to this work.}

\affil[1]{\orgdiv{Institute of Mathematics}, \orgname{NAS of Ukraine}, \orgaddress{ \postcode{01024}, \city{kyiv}, \country{Ukraine}}}

\affil[2]{\orgdiv{School of Mathematics}, \orgname{Jilin University}, \orgaddress{ \postcode{130012}, \city{Changchun},   \country{China}}}



\abstract{In this paper, we study the optimal filtering problem for a interacting particle system generated by stochastic differential equations with interaction. By using Malliavin calculus, we construct the differential equation of the covariance process and transform the filter problem to an optimal control problem. Finally we give the necessary condition that the coefficient of the optimal filter should satisfy.}

\keywords{Kalman filter, particle systems, optimal control, Malliavin calculus, G$\hat{a}$teaux derivatives }



\maketitle

\section{Introduction}
In this paper, we investigate the optimal filtering problem for a family of particles whose evolution is 
described by a stochastic flow model which is generated by a stochastic differential equation (SDE) with interaction. The concept of SDE with interaction is introduced by A.A. Dorogovtsev \cite{dorogovtsev2003stochastic, dorogovtsev2023measure} in the following form
\begin{equation*}
\left\{\begin{array}{l}
d x(u, t)=a\left(x(u, t), \mu_t, t\right) d t+\sum_{k=1}^{\infty} b_k\left(x(u, t), \mu_t, t\right) d w_k(t), \\
x(u, 0)=u, \quad u \in \mathbb{R}^n, \\
\mu_t=\mu_0 \circ x(\cdot, t)^{-1}.
\end{array}\right.
\end{equation*}
This type of SDE  describes the dynamics of a family of particles $\{x(u, \cdot)\}_{ u \in \mathbb{R}^{d}}$ that start from each point of the space $\mathbb{R}^{d}$. The initial mass distribution of the family of particles is denoted by $\mu_{0},$ which is supposed to be a probability measure on the Borel $\sigma-$field in $\mathbb{R}^{d}$. The mass distribution of the family of particles evolves over time, and at moment $t$, it can be written as the push forward of $\mu_{0}$ by the mapping $x(\cdot, t)$. The drift and diffusion coefficients depend on the mass distribution $\mu_{t}$. This gives the possibility to describe the motion of a particle that depends on the positions of all other particles in the system. 

A natural question arises: how to filter the stochastic dynamical systems with interaction when the measurements are linear in the unobservable variables?  
Based on the works of Kalman, \cite{kalman1961new,kalman1961neww}, the classical Kalman filter has been extensively studied and applied in many areas due to the naturally arise of observation outliers, for example, see \cite{durovic1999robust,lawrence1971kalman,masreliez1975approximate,tang2020robot}. For research on theoretical aspects, the stochastic filter is first studied by Kushner \cite{kushner1964dynamical,kushner1967dynamical} and Stratonovich \cite{stratonovich1960conditional}. Fujisaki et al. \cite{Fujisaki1972stochastic} point out that the optimal filter satisfies a non-linear stochastic partial differential equation (SPDE), known as   Kushner–FKK equation. Kallianpur and Striebel \cite{kallianpur1968estimation,kallianpur1969stochastic} establish the representation of the "unnormalized filter". The linear SPDE governing the dynamics of the "unnormalized filter" is investigated in \cite{duncan1967probability,mortensen1966optimal,zakai1969optimal}, which is called Duncan-Mortensen-Zakai
equation or Zakai's equation.

In this paper, we study the following optimal filtering problem. The state system is defined as
\begin{equation*}
    \left\{\begin{array}{l}
dx(u,t)=\left(A(t)x(u,t)+B(t)\Bar{x}_t\right)dt+\sigma(u,t)dW(t),\, t\in [0,T],\\
x(u,0)=u,\quad u\in\mathbf{R}^n,\\
\mu_t=\mu_0\circ x(\cdot,t)^{-1},
\end{array}\right.
\end{equation*}
and the measurement dynamics is given by
\begin{equation*}
    \left\{\begin{array}{l}
dy(u,t)=\left(C(t)x(u,t)+D(t)\Bar{x}_t\right)dt+\gamma(u,t)dV(t),\, t\in [0,T],\\
y(u,0)=u.
\end{array}\right.
\end{equation*}
Here, $\Bar{x}_t=\int_{\mathbf{R}^n}x\mu_t(dx)=\int_{\mathbf{R}^n}x(u,t)\mu_0(du)$. This system describes a signal system $\{x(u,\cdot)\}_{u\in \mathbf{R}^n}$ that contains many signals starting from different positions and being influenced by each other. We can only observe the process $y$, which contains partial information of $x$. Our goal is to construct a family of observable processes $\{z(u,\cdot)\}_{u\in \mathbf{R}^n}$ to estimate the unobservable signals using $y$. 

Because of the interacting term, it is harder to estimate the covariance of the error process. Thanks to Malliavin's calculus, we can deal with this difficulty. Then we use variation methods (used in \cite{ahmed1991quadratic,ahmed2002filtering}) to get the necessary conditions that the optimal filter should satisfy by introducing some processes and deriving their $G\hat{a}teaux$ derivatives. 

The rest of this paper is organized as follows. In Section 2, we introduce the state and measurement dynamics, then state the form of the filter we consider. In Section 3, we deal with the error process and show the dynamic of its covariance so that the filtering problem transforms into a control problem. In Section 4, we give the necessary condition that the optimal control must satisfy, and the filtering problem is solved at the same time.

\section{Dynamics and filtering problem}
Consider the system governed by the following linear SDE
\begin{equation}\label{x}
    \left\{\begin{array}{l}
dx(u,t)=\left(A(t)x(u,t)+B(t)\Bar{x}_t\right)dt+\sigma(u,t)dW(t),\, t\in [0,T],\\
x(u,0)=u,\quad u\in\mathbf{R}^n,\\
\mu_t=\mu_0\circ x(\cdot,t)^{-1}.
\end{array}\right.
\end{equation}
The measurement dynamics is given by:
\begin{equation}\label{y}
    \left\{\begin{array}{l}
dy(u,t)=\left(C(t)x(u,t)+D(t)\Bar{x}_t\right)dt+\gamma(u,t)dV(t),\, t\in [0,T],\\
y(u,0)=u,
\end{array}\right.
\end{equation}
where $\bar{x}_t=\int_{\mathbf{R}^n}x\mu_t(dx)=\int_{\mathbf{R}^n}x(u,t)\mu_0(du)$. The notation $\bar{\phi}_t=\int_{\mathbf{R}^n}\phi(u,t)\mu_0(du)$ will be used frequently in the following, where $\phi$ denotes arbitrary processes. Here, we assume the processes $\{x,y\}$ take values in $\mathbf{R}^n$ and $\mathbf{R}^m$, respectively. $\{W(t),V(t),t\ge 0\}$ are Brownian motions take values in $\mathbf{R}^d$.
Consequently,  the matrices $\{A,B,\sigma,C,D,\gamma\}$ take values in $\mathbf{R}^{n\times n},\mathbf{R}^{n\times n},\mathbf{R}^{n\times d},\mathbf{R}^{m\times n},\mathbf{R}^{m\times n},\mathbf{R}^{m\times d},$ respectively. 

Let $\mathcal{F}^y_t, t\ge 0$, be an increasing family of sub $\sigma$-algebras of the $\mathcal{F}$ induced by the random process $\{y(u,t),u\in \mathcal{R}^n,t\ge 0\}$. The aim is to find a process $z$ such that for all $u\in\mathbf{R}^n,t\ge 0$, $z(u,t)$ is $\mathcal{F}^y_t$-adapted process satisfying
\begin{align*}
\mathbb{E}z(u,t)=\mathbb{E}x(u,t),
\end{align*}
and $\int_{\mathbf{R}^n}\int_0^T\mathbb{E}\|x(u,t)-z(u,t)\|^2dt\mu_0(du)$ is minimized. In other words, we want to find an unbiased minimum variance filter. Our objective here is to derive the best unbiased minimum variance (UMV) linear filter driven by the observed process $y$,  described by the following SDE:
\begin{align}\label{z}
    \left\{\begin{array}{l}
dz(u,t)=\left(H(t)z(u,t)+M(t)\bar{z}_t\right)dt+\Gamma(t)dy(u,t),\,u\in\mathbf{R}^n, t\in[0,T],\\
z(u,0)=u,
\end{array}\right.
\end{align}
where $H,M,\Gamma$ are suitable matrix-valued functions to be determined.

\section{Reformulation of the filtering problem as a control problem}
We introduce the following basic assumptions:
\begin{enumerate}
	\item[(A1)]  There exist positive definite matrices $Q, Q_0\in\mathbf{R}^{d\times d}$ such that
\begin{align*}
\mathbb{E}\left[(W(t),\xi)(W(t),\eta)\right]=(Q\xi,\eta) t,\\
\mathbb{E}\left[(V(t),\xi)(V(t),\eta)\right]=(Q_0\xi,\eta) t
\end{align*}
for all $\xi,\eta\in\mathbf{R}^d, t\in[0,T]$.

\item[(A2)]  The matrices $A,B,C,D$ are locally integrable and $\sigma(u,\cdot)\in L^2\left([0,T],\mathbf{R}^{n\times d}\right)$, $\gamma(u,\cdot)\in L^2\left([0,T],\mathbf{R}^{m\times d}\right), $ for all $ u\in\mathbf{R}^n$.

\item[(A3)] The noises $W$ and $V$ are mutually 
independent.
\end{enumerate}
~\\
Define the error process:
\begin{align*}
e(u,t)=x(u,t)-z(u,t),\quad u\in\mathbf{R}^n,t\in[0,T].
\end{align*}
We use it to describe the variance of the filter. Here, $x$ is the solution of equation (\ref{x}), and $z$ is the solution of equation (\ref{z}) corresponding to any choice of $H,M,\Gamma$. Then the error process $e$ satisfies the following SDE:
\begin{align}\label{de}
    \left\{\begin{array}{l}
de(u,t)=\left[\left(A(t)-\Gamma(t)C(t)\right)e(u,t)+\left(B(t)-\Gamma(t)D(t)\right)\Bar{e}_t \right]dt\\
\quad\qquad\qquad+\left(A(t)-H(t)-\Gamma(t)C(t)\right)z(u,t)dt\\
\quad\qquad\qquad+\left(B(t)-M(t)-\Gamma(t)D(t)\right)\bar{z}_tdt\\
\quad\qquad\qquad+\sigma(u,t)dW(t)-\Gamma(t)\gamma(u,t)dV(t),
\\e(u,0)=0.
\end{array}\right.
\end{align}
Since the estimate is expected to be unbiased, we determine $H,M$ by $\Gamma$ and denote
\begin{align}\label{hm}
H_\Gamma(t)=A(t)-\Gamma(t)C(t),\notag\\
M_\Gamma(t)=B(t)-\Gamma(t)D(t).
\end{align}
We will provide the solution to (\ref{de}). For this, we integrate equation (\ref{de}) with respect to $\mu_0$, and get
\begin{align*}
    \left\{\begin{array}{l}
d\bar{e}_t=[H_\Gamma(t)+M_\Gamma(t)]\bar{e}_tdt+\bar{\sigma}_tdW(t)-\Gamma(t)\bar{\gamma}_tdV(t)
\\\bar{e}_0=0.
\end{array}\right.
\end{align*}
Using the transition operator $\Phi_\Gamma$, we can get the solution of $\bar{e}$ as
\begin{align}\label{eb}
\bar{e}_t=\int_0^t\Phi_\Gamma(t,\theta)\Bar{\sigma}_\theta dW(\theta)-\int_0^t\Phi_\Gamma(t,\theta)\Gamma(\theta)\Bar{\gamma}_\theta dV(\theta),
\end{align}
where $\Phi_\Gamma$ satisfies
\begin{align*}
    \left\{\begin{array}{l}
\frac{\partial}{\partial t}\Phi_\Gamma(t,s)=[H_\Gamma(t)+M_\Gamma(t)
]\Phi_\Gamma(t,s),
\\\Phi_\Gamma(s,s)=I_n.
\end{array}\right.
\end{align*}
Using another transition operator $\Psi_\Gamma$, we get the solution $e(u,t):$ 
\begin{align}\label{ie}
e(u,t)=&\int_0^t\Psi_\Gamma(t,\theta)M_\Gamma(\theta)\bar{e}_\theta d\theta+\int_0^t\Psi_\Gamma(t,\theta)\sigma(u,\theta)dW(\theta)\notag\\&-\int_0^t\Psi_\Gamma(t,\theta)\Gamma(\theta)\gamma(u,\theta)dV(\theta),
\end{align}
where the operator $\Psi_{\Gamma}$ satisfies 
\begin{align}\label{parPsi}
    \left\{\begin{array}{l}
\frac{\partial}{\partial t}\Psi_\Gamma(t,s)=H_\Gamma(t)\Psi_\Gamma(t,s),
\\\Psi_\Gamma(s,s)=I_n.
\end{array}\right.
\end{align}

Define the covariance operator of the error $e$ by
\begin{align}\label{K}
\left(K(u,t)\xi,\eta\right)=\mathbb{E}\left[\left(e(u,t),\xi\right)\left(e(u,t),\eta\right)\right].
\end{align}
To obtain the state equation of $K$, we introduce the definition and a proposition from Malliavin calculus (refer to \cite{hu2005integral,nualart2018introduction}).

\begin{definition}
[Chapter 6.4 of \cite{hu2005integral} or subsection 2.2 of \cite{han2013maximum}]
Let $\{\xi_k\}_{k=1}^{\infty}$ be an orthonormal basis of $L^2([0,\,T])$ such that $\xi_k$, $k=1,2,\cdots$ are smooth functions on $[0,\,T]$. 
We denote $\tilde{\xi}_{j,l}=\int_0^T\xi_j(t)dB_l(t)$, where $j=1,2,...,$ and $l=1,...,m$. Let $\mathcal{P}$ be the set of all polynomials of the standard Brownian motions $B$ over interval $[0,T]$. Namely, $\mathcal{P}$ contains all elements of the form
\begin{align*}
F(\omega)=f(\tilde{\xi}_{j_1,l_1},...,\tilde{\xi}_{j_n,l_n}),
\end{align*}
where $f$ is a polynomial of $n$ variables. The Malliavin derivative $D_s^lF$ is defined by
\begin{equation*}
D_s^lF=\sum_{i=1}^{n}\frac{\partial f}{\partial x_i}\left(\tilde{\xi}_{j_1,l_1},...,\tilde{\xi}_{j_n,l_n}\right)\xi_{j_i}(s)I_{\{j_i=l\}},\quad \forall 0\le s\le T.
\end{equation*}
\end{definition}
The following important proposition (Page 290 of \cite{han2013maximum} or Theorem 3.15 of \cite{nunno2008malliavin}) will be used,
\begin{align}\label{Malliavin}
\mathbb{E}\left(F\int_0^Tg(t)dB_j(t)\right)=\mathbb{E}\int_0^T(D_t^jF)g(t)dt.
\end{align}

Using this method of Malliavian calculus, we can obtain a functional differential equation for the covariance operator of the error process.
In the following we use the notation $A'$ to denote the transpose of a matrix $A$

\begin{lemma}
Let the assumptions (A1)-(A3) hold, then for each $\Gamma\in L^\infty\left([0,T],\mathbf{R}^{n\times m}\right)$, the error covariance $K$ satisfies the following functional differential equation:
\begin{align}\label{dotK}
\dot{K}(u,t)=K_1(u,t)+K_1(u,t)'
\end{align}
with
\begin{align}\label{K1}
K_1(u,t)=&\int_0^t\left[M_\Gamma(t)\Phi_\Gamma(t,s)+H_\Gamma(t)\right]\bar{\sigma}_sQ\bar{\sigma}_s'F_\Gamma(t,s)'ds \notag\\
&+\int_0^t\left[M_\Gamma(t)\Phi_\Gamma(t,s)+H_\Gamma(t)\right]\Gamma(s)\bar{\gamma}_sQ_0\bar{\gamma}_s'\Gamma(s)'F_\Gamma(t,s)'ds\notag\\
&+\int_0^tH_\Gamma(t)\Psi_\Gamma(t,s)\sigma(u,s)Q\sigma(u,s)'\Psi_\Gamma(t,s)'ds\notag\\
&+\int_0^tH_\Gamma(t)\Psi_\Gamma(t,s)\Gamma(s)\gamma(u,s)Q_0\gamma(u,s)'\Gamma(s)'\Psi_\Gamma(t,s)'ds\notag\\
&+\int_0^tH_\Gamma(t)\Psi_\Gamma(t,s)\sigma(u,s)Q\Bar{\sigma}_s'F_\Gamma(t,s)'ds\notag\\
&+\int_0^t\Psi_\Gamma(t,s)\sigma(u,s)Q\Bar{\sigma}_s'\left[M_\Gamma(t)\Phi_\Gamma(t,s)+H_\Gamma(t)F_\Gamma(t,s)\right]'ds\notag\\
&+\int_0^tH_\Gamma(t)\Psi_\Gamma(t,s)\Gamma(s)\gamma(u,s)Q_0\Bar{\sigma}_s'F_\Gamma(t,s)'ds\notag\\
&+\int_0^t\Psi_\Gamma(t,s)\Gamma(s)\gamma(u,s)Q_0\Bar{\sigma}_s'\left[M_\Gamma(t)\Phi_\Gamma(t,s)+H_\Gamma(t)F_\Gamma(t,s)\right]'ds,
\end{align}
and
\begin{align*}
F_\Gamma(t,s)=\int_s^t\Psi_\Gamma(t,\theta)M_\Gamma(\theta)\Phi_\Gamma(\theta,s)d\theta.
\end{align*}
\end{lemma}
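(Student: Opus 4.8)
The plan is to compute $\dot K(u,t)$ directly from the explicit representation \eqref{ie} of the error process $e(u,t)$, together with the definition \eqref{K} of the covariance operator. Since $K$ is a covariance, $(K(u,t)\xi,\eta)=\mathbb{E}[(e(u,t),\xi)(e(u,t),\eta)]$, I would differentiate under the expectation in $t$, which by the product rule produces two symmetric pieces $\mathbb{E}[(\dot e(u,t),\xi)(e(u,t),\eta)]$ and $\mathbb{E}[(e(u,t),\xi)(\dot e(u,t),\eta)]$; the first of these will be identified with $(K_1(u,t)\xi,\eta)$ and the second with its transpose, which explains the form \eqref{dotK}. The derivative $\dot e(u,t)$ is obtained by differentiating \eqref{ie}: the Lebesgue integral term contributes $M_\Gamma(t)\bar e_t + \int_0^t H_\Gamma(t)\Psi_\Gamma(t,\theta)M_\Gamma(\theta)\bar e_\theta\,d\theta$ (using \eqref{parPsi}), while the two stochastic integrals contribute both a ``boundary'' term ($\Psi_\Gamma(t,t)=I_n$ times the integrand at $\theta=t$, namely $\sigma(u,t)\dot W(t)$ and $-\Gamma(t)\gamma(u,t)\dot V(t)$ informally) and an integrated $H_\Gamma(t)\Psi_\Gamma(t,\theta)(\cdots)$ term. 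It is cleaner to write $e(u,t+h)-e(u,t)$ and expand, keeping track of which increments are $\mathcal{F}_t$-measurable.

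Next I would substitute the representation \eqref{eb} of $\bar e_t$ (and $\bar e_\theta$) into these expressions so that everything is written as iterated stochastic integrals against $dW$ and $dV$ with deterministic integrands, and similarly re-express $e(u,t)$ itself via \eqref{ie} and \eqref{eb}. At that point every term in $\mathbb{E}[(\dot e(u,t),\xi)(e(u,t),\eta)]$ is the expectation of a product of two stochastic integrals (or a stochastic integral times an explicit stochastic integral). Here is where I would invoke either the Itô isometry directly or, as the authors signal, the Malliavin integration-by-parts formula \eqref{Malliavin}: applying \eqref{Malliavin} to pull the second factor inside as a Malliavin derivative reduces $\mathbb{E}[F\int_0^t g\,dB_j]$ to $\mathbb{E}\int_0^t (D^j_t F) g\,dt$, and since the integrands are deterministic the Malliavin derivatives are explicit. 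Using assumption (A1) to turn the resulting $dt$-integrals into $Q$- and $Q_0$-weighted quadratic forms, and (A3) to kill all $W$–$V$ cross terms, collapses the double integrals to single integrals in $s$ over $[0,t]$. The bookkeeping naturally organizes the surviving terms into: (i) a ``boundary $\times$ boundary'' contribution giving the $\int_0^t H_\Gamma(t)\Psi_\Gamma(t,s)\sigma Q\sigma'\Psi_\Gamma(t,s)'ds$ and $\gamma$-analogue (lines 3–4 of \eqref{K1}); (ii) ``$\bar e$-drift $\times$ $e$'' and ``$\bar e$-drift $\times$ $\bar e$-part of $e$'' contributions giving the $F_\Gamma$ and $\Phi_\Gamma$ terms (lines 1–2); and (iii) the cross contributions between the boundary terms of $\dot e$ and the $\sigma,\gamma$ parts of $e$ against the $\bar\sigma,\bar\gamma$ parts, giving lines 5–8. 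The definition $F_\Gamma(t,s)=\int_s^t\Psi_\Gamma(t,\theta)M_\Gamma(\theta)\Phi_\Gamma(\theta,s)\,d\theta$ appears precisely as the kernel coupling the $\int_0^t\Psi_\Gamma(t,\theta)M_\Gamma(\theta)\bar e_\theta\,d\theta$ piece of $e(u,t)$ with the $dW,dV$ decomposition of $\bar e_\theta$.

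The main obstacle I anticipate is the combinatorial bookkeeping rather than any single hard estimate: $e(u,t)$ has three structural pieces (the $\bar e$-drift integral, the $\sigma\,dW$ integral, the $\Gamma\gamma\,dV$ integral) and $\dot e(u,t)$ likewise has the boundary terms plus the $H_\Gamma\Psi_\Gamma(\cdots)$-integrated versions, so the product $\mathbb{E}[(\dot e,\xi)(e,\eta)]$ a priori has on the order of a dozen cross terms, each requiring one application of the isometry/Malliavin formula and careful use of the Fubini theorem to interchange the order of the $\theta$- and $s$-integrations and to identify the $F_\Gamma$ kernel. A secondary technical point is justifying differentiation under the expectation and the interchange of $\frac{d}{dt}$ with the stochastic integrals; this is where (A2) and $\Gamma\in L^\infty$ are used, guaranteeing the requisite $L^2$-integrability of all integrands and the absolute continuity of $t\mapsto K(u,t)$. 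Once all terms are assembled and matched against \eqref{K1}, symmetry of the covariance gives the second summand $K_1(u,t)'$ for free, completing the identification \eqref{dotK}.
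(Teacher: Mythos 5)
Your toolkit is the same as the paper's --- the explicit solution formulas \eqref{eb} and \eqref{ie}, the computation of mixed second moments via the Malliavin formula \eqref{Malliavin} and It\^o isometry under (A1), (A3), and the kernel $F_\Gamma$ --- but you reverse the order of the two main operations, and that reversal is where the gap sits. The paper never differentiates the stochastic process $e(u,\cdot)$: for fixed $t$ it first reduces $(K(u,t)\xi,\eta)$ to a sum of deterministic $ds$-integrals (the cross terms $\mathbb{E}[(I_1,\xi)(I_2,\eta)]$, $\mathbb{E}[(I_1,\xi)(I_3,\eta)]$ via \eqref{Malliavin}, the diagonal ones by isometry), and only then differentiates this deterministic representation in $t$, using nothing beyond Leibniz' rule, $\partial_t\Psi_\Gamma=H_\Gamma\Psi_\Gamma$, $\partial_t F_\Gamma=M_\Gamma\Phi_\Gamma+H_\Gamma F_\Gamma$ and $F_\Gamma(t,t)=0$. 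Your route instead starts from $\dot K=\mathbb{E}[(\dot e,\xi)(e,\eta)]+\mathbb{E}[(e,\xi)(\dot e,\eta)]$ with $\dot e$ containing the informal white-noise terms $\sigma(u,t)\dot W(t)$ and $-\Gamma(t)\gamma(u,t)\dot V(t)$. This symmetric two-term product rule is not valid for an It\^o-type process: in the increment expansion you yourself defer to, $K(u,t+h)-K(u,t)$ contains, besides the two mixed terms, the diagonal term $\mathbb{E}\big[(\Delta e)(\Delta e)'\big]$, which is of order $h$ (it behaves like $\big(\sigma(u,t)Q\sigma(u,t)'+\Gamma(t)\gamma(u,t)Q_0\gamma(u,t)'\Gamma(t)'\big)h+o(h)$), not $o(h)$; equivalently, pairing $\sigma(u,t)\dot W(t)$ against the $dW$-part of $e(u,t)$ produces an endpoint delta contribution whose weight the informal calculus cannot decide. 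Your sketch never says how this contribution is handled, yet it is not negligible, so as written the proposal does not establish the stated identity $\dot K=K_1+K_1'$ with $K_1$ given by \eqref{K1}.

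Two smaller points. First, your bookkeeping labels are off: the terms $\int_0^t H_\Gamma(t)\Psi_\Gamma(t,s)\sigma(u,s)Q\sigma(u,s)'\Psi_\Gamma(t,s)'\,ds$ arise from the $H_\Gamma(t)\Psi_\Gamma(t,\cdot)$-integrated part of the formal derivative paired with $I_2$ (i.e.\ from differentiating the kernel $\Psi_\Gamma(t,s)$ inside the deterministic representation), not from a ``boundary $\times$ boundary'' pairing; the boundary pairings are exactly the endpoint contributions discussed above. Second, even in the paper's order of operations the upper-limit (Leibniz) contributions of the $\Psi_\Gamma\sigma Q\sigma'\Psi_\Gamma'$ and $\Psi_\Gamma\Gamma\gamma Q_0\gamma'\Gamma'\Psi_\Gamma'$ integrals do not vanish (unlike those carrying a factor $F_\Gamma(t,t)=0$), so a rigorous version of your argument must track precisely these terms and reconcile them with \eqref{K1}; computing the deterministic representation of $K$ first and differentiating afterwards, as the paper does, is the clean way to isolate them.
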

\begin{proof}
For $\Gamma\in L^\infty\left([0,T],\mathbf{R}^{n\times m}\right)$,  assumption (A2) implies that $H_\Gamma,$ and $ M_\Gamma$ are locally integrable, and hence the transition operators $\Phi_{\Gamma}$ and $\Psi_{\Gamma}$ are well defined and 
\begin{align*}
\sup\left\{\|\Phi_\Gamma(t,s)\|, 0\le s\le t\le T\right\}<\infty,\\
\sup\left\{\|\Psi_\Gamma(t,s)\|, 0\le s\le t\le T\right\}<\infty.
\end{align*}
Rewrite equation (\ref{ie}) as
\begin{align*}
e(u,t)=I_1(t)+I_2(u,t)+I_3(u,t).
\end{align*}
Firstly, notice that $I_1(t)$ dependends on $W,$ and $ V$. By applying Malliavin calculus, we have
\begin{align*}
D_\theta^W\big[(\bar{e}_t, \xi)\big]=\bar{\sigma}_\theta'\Phi_\Gamma(t,\theta)'\xi\mathbf{1}_{\{\theta\le t\}},
\end{align*}
and
\begin{align*}
D_\theta^V\big[(\bar{e}_t, \xi)\big]=\bar{\gamma}_\theta'\Gamma(\theta)'\Phi_\Gamma(t,\theta)'\xi\mathbf{1}_{\{\theta\le t\}}.
\end{align*}
Thus, by using formula (\ref{Malliavin}) and the assumption (A1), we derive
\begin{align*}
\mathbb{E}&\left[(I_1(t),\xi)(I_2(u,t),\eta)\right]\\=&\int_0^t\eta'\Psi_\Gamma(t,s)\sigma(u,s)QD_s^W\big[(I_1(t),\xi)\big]ds\\
=&\int_0^t\int_0^t\eta'\Psi_\Gamma(t,s)\sigma(u,s)Q\Bar{\sigma}_s'\Phi_\Gamma(t,s)'M_\Gamma(\theta)'\Psi_\Gamma(t,\theta)'\xi \mathbf{1}_{\{s\le \theta\}} d\theta ds.
\end{align*}
Let us define a function
\begin{align}\label{F}
F_\Gamma(t,s)=\int_s^t\Psi_\Gamma(t,\theta)M_\Gamma(\theta)\Phi_\Gamma(\theta,s)d\theta,
\end{align}
then we can continue:
\begin{align*}
\mathbb{E}&\left[(I_1(t),\xi)(I_2(u,t),\eta)\right]\\
=&\int_0^t\eta'\Psi_\Gamma(t,s)\sigma(u,s)Q\Bar{\sigma}_s'F_\Gamma(t,s)'\xi ds\\
=&\int_0^t\Big(\Psi_\Gamma(t,s)\sigma(u,s)Q\Bar{\sigma}_s'F_\Gamma(t,s)'\xi,\eta\Big)ds.
\end{align*}
Note that the function $F_{\Gamma}$  satisfies $F_\Gamma(t,t)=0$ and 
\begin{align}\label{parF}
\frac{\partial}{\partial t}F_\Gamma(t,s)=M_\Gamma(t)\Phi_\Gamma(t,s)+H_\Gamma(t)F_\Gamma(t,s),\quad 0\le s\le t\le T.
\end{align}

Similarly, we find
\begin{align*}
\mathbb{E}\left[(I_1(t),\xi)(I_3(u,t),\eta)\right]=\int_0^t\Big(\Psi_\Gamma(t,s)\Gamma(s)\gamma(u,s)Q_0\Bar{\sigma}_s'F_\Gamma(t,s)'\xi,\eta\Big)ds.
\end{align*}
The last interacting term $\mathbb{E}\left[(I_2(u,t),\xi)(I_3(u,t),\eta)\right]=0$ because of the independence of processes $W$ and $V$.

By It$\hat{\rm o}$'s symmetry, we have
\begin{align*}
\mathbb{E}\left[(I_2(u,t),\xi)(I_2(u,t),\eta)\right]=\int_0^t\Big(\Psi_\Gamma(t,s)\sigma(u,s)Q\sigma(u,s)'\Psi_\Gamma(t,s)\xi,\eta\Big)ds,
\end{align*}
and
\begin{align*}
\mathbb{E}\left[(I_3(u,t),\xi)(I_3(u,t),\eta)\right]=\int_0^t\Big(\Psi_\Gamma(t,s)\Gamma(s)\gamma(u,s)Q_0\gamma(u,s)'\Gamma(s)'\Psi_\Gamma(t,s)\xi,\eta\Big)ds
\end{align*}
for each pair of $\xi, \eta\in\mathbf{R}^n$. Note that we can rewrite $I_1$ using the function $F_{\Gamma}:$
\begin{align*}
I_1(t)=&\int_0^t\Psi_\Gamma(t,s)M_\Gamma(s)\int_0^s\Phi_\Gamma(t,\theta)\Bar{\sigma}_\theta dW(\theta) ds\\
&-\int_0^t\Psi_\Gamma(t,s)M_\Gamma(s)\int_0^s\Phi_\Gamma(t,\theta)\Gamma(\theta)\Bar{\gamma}_\theta dV(\theta) ds\\
=&\int_0^tF_\Gamma(t,s)\bar{\sigma}_sdW(s)-\int_0^tF_\Gamma(t,s)\Gamma(s)\bar{\gamma}_sdV(s).
\end{align*}
From this representation we get
\begin{align*}
\mathbb{E}\left[(I_1(t),\xi)(I_1(t),\eta)\right]=&\int_0^t\Big(F_\Gamma(t,s)\bar{\sigma}_sQ\bar{\sigma}_s'F_\Gamma(t,s)'\xi,\eta\Big)ds\\
&+\int_0^t\Big(F_\Gamma(t,s)\Gamma(s)\bar{\gamma}_sQ_0\bar{\gamma}_s'\Gamma(s)'F_\Gamma(t,s)'\xi,\eta\Big)ds.
\end{align*}
Putting all terms together, we conclude 
\begin{align*}
\left(K(u,t)\xi,\eta\right)=&\mathbb{E}\left[\left(e(u,t),\xi\right)\left(e(u,t),\eta\right)\right]\\
=&\int_0^t\Big(F_\Gamma(t,s)\bar{\sigma}_sQ\bar{\sigma}_s'F_\Gamma(t,s)'\xi,\eta\Big)ds\\
&+\int_0^t\Big(F_\Gamma(t,s)\Gamma(s)\bar{\gamma}_sQ_0\bar{\gamma}_s'\Gamma(s)'F_\Gamma(t,s)'\xi,\eta\Big)ds\\
&+\int_0^t\Big(\Psi_\Gamma(t,s)\sigma(u,s)Q\sigma(u,s)'\Psi_\Gamma(t,s)'\xi,\eta\Big)ds\\
&+\int_0^t\Big(\Psi_\Gamma(t,s)\Gamma(s)\gamma(u,s)Q_0\gamma(u,s)'\Gamma(s)'\Psi_\Gamma(t,s)'\xi,\eta\Big)ds\\
&+\int_0^t\Big(\Psi_\Gamma(t,s)\sigma(u,s)Q\Bar{\sigma}_s'F_\Gamma(t,s)'\xi,\eta\Big)ds\\
&+\int_0^t\Big(F_\Gamma(t,s)\Bar{\sigma}_sQ\sigma(u,s)'\Psi_\Gamma(t,s)'\xi,\eta\Big)ds\\
&+\int_0^t\Big(\Psi_\Gamma(t,s)\Gamma(s)\gamma(u,s)Q_0\Bar{\sigma}_s'F_\Gamma(t,s)'\xi,\eta\Big)ds
\\&+\int_0^t\Big(F_\Gamma(t,s)\Bar{\sigma}_sQ_0\gamma(u,s)'\Gamma(s)'\Psi_\Gamma(t,s)'\xi,\eta\Big)ds
\end{align*}
for any $\xi, \eta \in\mathbf{R}^n$, which gives us the representation of $K$. Now, using the properties (\ref{parPsi}) and (\ref{parF}), we have
\begin{align*}
\dot{K}(u,t)=&\int_0^t\left[M_\Gamma(t)\Phi_\Gamma(t,s)+H_\Gamma(t)\right]\bar{\sigma}_sQ\bar{\sigma}_s'F_\Gamma(t,s)'ds \\
&+\int_0^tF_\Gamma(t,s)\bar{\sigma}_sQ\bar{\sigma}_s'\left[M_\Gamma(t)\Phi_\Gamma(t,s)+H_\Gamma(t)\right]'ds\\
&+\int_0^t\left[M_\Gamma(t)\Phi_\Gamma(t,s)+H_\Gamma(t)\right]\Gamma(s)\bar{\gamma}_sQ_0\bar{\gamma}_s'\Gamma(s)'F_\Gamma(t,s)'ds\\
&+\int_0^tF_\Gamma(t,s)\Gamma(s)\bar{\gamma}_sQ_0\bar{\gamma}_s'\Gamma(s)'\left[M_\Gamma(t)\Phi_\Gamma(t,s)+H_\Gamma(t)\right]'ds\\
&+\int_0^tH_\Gamma(t)\Psi_\Gamma(t,s)\sigma(u,s)Q\sigma(u,s)'\Psi_\Gamma(t,s)'ds\\
&+\int_0^t\Psi_\Gamma(t,s)\sigma(u,s)Q\sigma(u,s)'\Psi_\Gamma(t,s)'H_\Gamma(t)'ds\\
&+\int_0^tH_\Gamma(t)\Psi_\Gamma(t,s)\Gamma(s)\gamma(u,s)Q_0\gamma(u,s)'\Gamma(s)'\Psi_\Gamma(t,s)'ds\\
&+\int_0^t\Psi_\Gamma(t,s)\Gamma(s)\gamma(u,s)Q_0\gamma(u,s)'\Gamma(s)'\Psi_\Gamma(t,s)'H_\Gamma(t)'ds\\
&+\int_0^tH_\Gamma(t)\Psi_\Gamma(t,s)\sigma(u,s)Q\Bar{\sigma}_s'F_\Gamma(t,s)'ds\\
&+\int_0^t\Psi_\Gamma(t,s)\sigma(u,s)Q\Bar{\sigma}_s'\left[M_\Gamma(t)\Phi_\Gamma(t,s)+H_\Gamma(t)F_\Gamma(t,s)\right]'ds\\
&+\int_0^t\left[M_\Gamma(t)\Phi_\Gamma(t,s)+H_\Gamma(t)F_\Gamma(t,s)\right]\Bar{\sigma}_sQ\sigma(u,s)'\Psi_\Gamma(t,s)'ds\\
&+\int_0^tF_\Gamma(t,s)\Bar{\sigma}_sQ\sigma(u,s)'\Psi_\Gamma(t,s)'H_\Gamma(t)'ds\\
&+\int_0^tH_\Gamma(t)\Psi_\Gamma(t,s)\Gamma(s)\gamma(u,s)Q_0\Bar{\sigma}_s'F_\Gamma(t,s)'ds\\
&+\int_0^t\Psi_\Gamma(t,s)\Gamma(s)\gamma(u,s)Q_0\Bar{\sigma}_s'\left[M_\Gamma(t)\Phi_\Gamma(t,s)+H_\Gamma(t)F_\Gamma(t,s)\right]'ds\\
&+\int_0^t\left[M_\Gamma(t)\Phi_\Gamma(t,s)+H_\Gamma(t)F_\Gamma(t,s)\right]\Bar{\sigma}_sQ_0\gamma(u,s)'\Gamma(s)'\Psi_\Gamma(t,s)'ds\\
&+\int_0^tF_\Gamma(t,s)\Bar{\sigma}_sQ_0\gamma(u,s)'\Gamma(s)'\Psi_\Gamma(t,s)'H_\Gamma(t)'ds\\
:=&K_1(u,t)+K_1(u,t)',
\end{align*}
where
\begin{align*}
K_1(u,t)=&\int_0^t\left[M_\Gamma(t)\Phi_\Gamma(t,s)+H_\Gamma(t)\right]\bar{\sigma}_sQ\bar{\sigma}_s'F_\Gamma(t,s)'ds \\
&+\int_0^t\left[M_\Gamma(t)\Phi_\Gamma(t,s)+H_\Gamma(t)\right]\Gamma(s)\bar{\gamma}_sQ_0\bar{\gamma}_s'\Gamma(s)'F_\Gamma(t,s)'ds\\
&+\int_0^tH_\Gamma(t)\Psi_\Gamma(t,s)\sigma(u,s)Q\sigma(u,s)'\Psi_\Gamma(t,s)'ds\\
&+\int_0^tH_\Gamma(t)\Psi_\Gamma(t,s)\Gamma(s)\gamma(u,s)Q_0\gamma(u,s)'\Gamma(s)'\Psi_\Gamma(t,s)'ds\\
&+\int_0^tH_\Gamma(t)\Psi_\Gamma(t,s)\sigma(u,s)Q\Bar{\sigma}_s'F_\Gamma(t,s)'ds\\
&+\int_0^t\Psi_\Gamma(t,s)\sigma(u,s)Q\Bar{\sigma}_s'\left[M_\Gamma(t)\Phi_\Gamma(t,s)+H_\Gamma(t)F_\Gamma(t,s)\right]'ds\\
&+\int_0^tH_\Gamma(t)\Psi_\Gamma(t,s)\Gamma(s)\gamma(u,s)Q_0\Bar{\sigma}_s'F_\Gamma(t,s)'ds\\
&+\int_0^t\Psi_\Gamma(t,s)\Gamma(s)\gamma(u,s)Q_0\Bar{\sigma}_s'\left[M_\Gamma(t)\Phi_\Gamma(t,s)+H_\Gamma(t)F_\Gamma(t,s)\right]'ds.
\end{align*}
\end{proof}

Now, we are prepared to formulate the ﬁltering problem as a control problem. First, we recall that the equation for the linear filter (\ref{z}), with $\Gamma$ to be determined and $H, M$ are determined by $\Gamma$ through (\ref{hm}), gives an unbiased estimate of $x.$ The covariance of the estimate is defined by (\ref{K}) and satisfies the equation (\ref{dotK}). Then we choose $\Gamma$ to obtain a filter with a minimum variance estimate, which minimizes $TrK(u,t)$. In this paper, we consider a more general case. 
Our aim is to minimize $Tr\big(\Sigma(t)K(u,t)\big),$
 where $\Sigma\in L^1\left([0,T],\mathbf{R}^{n\times n}\right)$ is any given real positive deﬁnite symmetric
matrix-valued function. Therefore, the optimal filtering
problem is equivalent to the optimal control problem: ﬁnd $\Gamma\in L^\infty\big([0,T],\mathbf{R}^{n\times m}\big)$ that imparts a minimum to the
functional $J$ subject to the dynamic constraint (\ref{dotK}).

\section{The optimal filter}
We have seen in the preceding section that the optimal filtering problem can be determined by solving the following control problem with interaction: the state equation is given by:
\begin{align}\label{state}
    \left\{\begin{array}{l}
\dot{K}(u,t)=K_1(u,t)+K_1(u,t)',\, t\in[0,T],
\\K(u,0)=0,\qquad u\sim \mu_0,
\end{array}\right.
\end{align}
where $K_1$ is defined as (\ref{K1}). The cost function is defined as
\begin{align}\label{cost}
J(\Gamma)=\int_{\mathbf{R}^n}\int_0^TTr\big(\Sigma(t)K(u,t)\big)dt\mu_0(du).
\end{align}
For simplicity of notations, we will solve the 1-dimensional case in this section. In this case, $Q=Q_0=1$. Moreover, $\Phi, \Psi$ are given by $\Phi_\Gamma(t,s)=exp\{\int_s^t H_\Gamma(r)+M_\Gamma(r)dr\}, \,\Psi_\Gamma(t,s)=exp\{\int_s^t H_\Gamma(r)dr\}$.

We use the variation technique to obtain the necessary conditions of the optimal control and, ultimately, the optimal ﬁlter. So the $G\hat{a}teaux$ derivative of $K$ with respect to $\Gamma$ on $L^\infty\big([0,T], \mathbf{R}\big)$ is needed. Thus, we must derive the $G\hat{a}teaux$ derivative of $\Phi_\Gamma, \Psi_\Gamma$ and $F_\Gamma$. These are stated in the following results.
~\\

\begin{lemma}
Let $\tilde{\Theta}$ denote the $G\hat{a}teaux$ derivative of the map $\Gamma\to \Theta_\Gamma$ at $\Gamma_0$ in the direction $\beta$ for $\Theta=\Phi,\Psi,F$, where $\beta$ denotes $\Gamma-\Gamma_0$ for any $\Gamma\in L^\infty\big([0,T],\mathbf{R}\big)$. Then we have
\begin{align}\label{tildeP}
\Tilde{\Phi}(t,s)=\int_s^t\Phi_1(t,s,\theta)\beta(\theta)d\theta,
\quad
\Tilde{\Psi}(t,s)&=\int_s^t\Psi_1(t,s,\theta)\beta(\theta)d\theta
\end{align}
and
\begin{align*}
\Tilde{F}(t,s)=\int_s^tF_1(t,s,\theta)\beta(\theta)d\theta
\end{align*}
with
\begin{align*}
\Phi_1(t,s,\theta)=&-\big(C(\theta)+D(\theta)\big)\Phi_{\Gamma_0}(t,s),\\
\Psi_1(t,s,\theta)=&-C(\theta)\Psi_{\Gamma_0}(t,s)
\end{align*}
and
\begin{align*}
F_1(t,s,\theta)=&-\int_s^\theta C(\theta)\Psi_{\Gamma_0}(t,r)M_{\Gamma_0}(r)\Phi_{\Gamma_0}(r,s)dr\\
&+\Psi_{\Gamma_0}(t,\theta)D(\theta)\Phi_{\Gamma_0}(\theta,s)\\
&-\int_s^\theta\Psi_{\Gamma_0}(t,r)M_{\Gamma_0}(r)\big(C(\theta)+D(\theta)\big)\Phi_{\Gamma_0}(t,r)dr.
\end{align*}
\end{lemma}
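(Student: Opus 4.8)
The plan is to exploit that every occurrence of $\Gamma$ in $\Phi_\Gamma$, $\Psi_\Gamma$ and $F_\Gamma$ enters only through the affine maps $\Gamma\mapsto H_\Gamma=A-\Gamma C$ and $\Gamma\mapsto M_\Gamma=B-\Gamma D$. Working along the segment $\Gamma_0+\varepsilon\beta$ this gives at once the $G\hat{a}$teaux derivatives of the generators: that of $H_{\Gamma_0+\varepsilon\beta}$ at $\varepsilon=0$ is $-\beta C$, that of $M_{\Gamma_0+\varepsilon\beta}$ is $-\beta D$, and that of the generator $H_\Gamma+M_\Gamma$ of $\Phi_\Gamma$ is $-\beta(C+D)$. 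Everything else is a Duhamel-type chain rule, and the proof splits into: (i) differentiate $\Psi_\Gamma$ and $\Phi_\Gamma$; (ii) differentiate $F_\Gamma$ by the product rule; (iii) reorganize the resulting iterated integrals by Fubini into the stated $\int_s^t(\cdot)\beta(\theta)\,d\theta$ form.

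For step (i) I would write $\Psi^{\varepsilon}=\Psi_{\Gamma_0+\varepsilon\beta}$ and subtract the defining equations $\partial_t\Psi^{\varepsilon}(t,s)=H^{\varepsilon}(t)\Psi^{\varepsilon}(t,s)$ and $\partial_t\Psi_{\Gamma_0}(t,s)=H_{\Gamma_0}(t)\Psi_{\Gamma_0}(t,s)$, with $H^{\varepsilon}=H_{\Gamma_0}-\varepsilon\beta C$; variation of constants then yields
\[
\Psi^{\varepsilon}(t,s)-\Psi_{\Gamma_0}(t,s)=-\varepsilon\int_s^t\Psi_{\Gamma_0}(t,r)\beta(r)C(r)\Psi^{\varepsilon}(r,s)\,dr .
\]
Dividing by $\varepsilon$ and letting $\varepsilon\to0$, using that $\Psi^{\varepsilon}\to\Psi_{\Gamma_0}$ uniformly on $\{0\le s\le t\le T\}$ (the uniform bounds on the transition operators are exactly those recorded in the proof of the previous lemma), gives $\tilde\Psi(t,s)=-\int_s^t\Psi_{\Gamma_0}(t,r)C(r)\Psi_{\Gamma_0}(r,s)\beta(r)\,dr$. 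In the scalar setting $\Psi_{\Gamma_0}(t,r)\Psi_{\Gamma_0}(r,s)=\Psi_{\Gamma_0}(t,s)$, which is precisely $\tilde\Psi(t,s)=\int_s^t\Psi_1(t,s,\theta)\beta(\theta)\,d\theta$ with $\Psi_1(t,s,\theta)=-C(\theta)\Psi_{\Gamma_0}(t,s)$; running the identical argument with $C$ replaced by $C+D$ produces $\tilde\Phi$ and $\Phi_1$.

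For steps (ii)--(iii) I would plug these into $F_\Gamma(t,s)=\int_s^t\Psi_\Gamma(t,r)M_\Gamma(r)\Phi_\Gamma(r,s)\,dr$ and differentiate: the product rule produces three integrals, one with $\tilde\Psi$ in place of $\Psi_{\Gamma_0}$, one with $\tilde M=-\beta D$ in place of $M_{\Gamma_0}$, and one with $\tilde\Phi$ in place of $\Phi_{\Gamma_0}$, the higher-order cross terms being $o(\varepsilon)$ by the same uniform bounds. The $\tilde M$ term is already of the desired form and yields the $\Psi_{\Gamma_0}(t,\theta)D(\theta)\Phi_{\Gamma_0}(\theta,s)$ summand of $F_1$. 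The $\tilde\Psi$ and $\tilde\Phi$ terms are iterated integrals such as $\int_s^t\big(\int_r^t(\cdot)\beta(\theta)\,d\theta\big)M_{\Gamma_0}(r)\Phi_{\Gamma_0}(r,s)\,dr$; applying Fubini over the triangles $\{s\le r\le\theta\le t\}$ and $\{s\le\theta\le r\le t\}$ turns each into $\int_s^t(\cdot)\beta(\theta)\,d\theta$, the inner $r$-integral running over $[s,\theta]$ in the $\tilde\Psi$ term and over $[\theta,t]$ in the $\tilde\Phi$ term, and collecting the integrands gives the remaining two summands of $F_1(t,s,\theta)$.

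The genuinely non-routine part is the bookkeeping in step (iii): keeping straight, after each Fubini swap, which of the two time variables becomes the direction variable $\theta$, what the resulting inner limits of integration are, and the signs; together with a clean argument that the difference quotients $\varepsilon^{-1}(\Theta_{\Gamma_0+\varepsilon\beta}-\Theta_{\Gamma_0})$ for $\Theta=\Phi,\Psi,F$ converge uniformly in $(s,t)$, so that the formal derivatives are true $G\hat{a}$teaux derivatives -- this needs only $A,B,C,D$ locally integrable, $\beta\in L^\infty$, and $\sup\|\Phi_{\Gamma_0}\|,\sup\|\Psi_{\Gamma_0}\|<\infty$. I expect any slip to occur in the $\tilde\Phi$ contribution to $F$, where the Fubini region is the lower triangle and the inner integral runs over $[\theta,t]$.
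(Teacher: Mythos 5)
Your route is the same as the paper's: the paper also obtains $\tilde\Phi$ and $\tilde\Psi$ from the variational equations $\partial_t\tilde\Psi(t,s)=H_{\Gamma_0}(t)\tilde\Psi(t,s)-\beta(t)C(t)\Psi_{\Gamma_0}(t,s)$, $\tilde\Psi(s,s)=0$ (and the analogue with $C+D$ for $\tilde\Phi$), solved by Duhamel, and then differentiates $F_\Gamma(t,s)=\int_s^t\Psi_\Gamma(t,\theta)M_\Gamma(\theta)\Phi_\Gamma(\theta,s)\,d\theta$ by the product rule and reorders the iterated integrals. Your variation-of-constants identity for $\Psi^{\varepsilon}-\Psi_{\Gamma_0}$ together with the uniform bounds on the transition operators is just a more careful justification of the limit that the paper dismisses as ``straightforward computation''; that extra care is welcome but not a different method.

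Two discrepancies with the displayed lemma are worth flagging. First, your Fubini bookkeeping for the $\tilde\Phi$-term is the one consistent with the lemma's own formula for $\tilde\Phi$: substituting $\tilde\Phi(\theta,s)=-\int_s^\theta\big(C(r)+D(r)\big)\Phi_{\Gamma_0}(\theta,s)\beta(r)\,dr$ and exchanging the order of integration over $\{s\le r\le\theta\le t\}$ gives, as you say, an inner integral over $[\theta,t]$ with kernel $\Psi_{\Gamma_0}(t,r)M_{\Gamma_0}(r)\Phi_{\Gamma_0}(r,s)$; the paper's proof instead substitutes for $\tilde\Phi(\theta,s)$ an integral over $[\theta,t]$ containing $\Phi_{\Gamma_0}(t,\theta)$, which is how the stated third summand of $F_1$ (inner limits $[s,\theta]$, argument $\Phi_{\Gamma_0}(t,r)$) arises; so a correctly executed version of your argument proves a corrected third summand rather than the one printed. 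Second, watch your own sign in the middle term: with $\tilde M=-\beta D$, which you state, the $\tilde M$-term contributes the summand $-\Psi_{\Gamma_0}(t,\theta)D(\theta)\Phi_{\Gamma_0}(\theta,s)$, not the positive one in the lemma (the paper's proof inserts $+D(\theta)\beta(\theta)$ at this point, which is inconsistent with $M_\Gamma=B-\Gamma D$ and $\beta=\Gamma-\Gamma_0$); as written, your sketch quotes $\tilde M=-\beta D$ and yet claims to recover the positive summand, so fix that sign in your write-up rather than inheriting it. Neither point is a gap in your approach; they are places where your computation, done carefully, deviates from the formula the paper actually displays.
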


\begin{proof}
Notice that, from straightforward computation, $\tilde{\Phi}$ satisfies equation
\begin{align*}
    \left\{\begin{array}{l}
\frac{\partial}{\partial t}\tilde{\Phi}(t,s)=\big(H_{\Gamma_0}(t)+M_{\Gamma_0}(t)\big)\tilde{\Phi}(t,s)-\beta(t)\big(C(t)+D(t)\big)\Phi_{\Gamma_0}(t,s),
\\\\
\tilde{\Phi}(s,s)=0,
\end{array}\right.
\end{align*}
and similarly,
\begin{align*}
    \left\{\begin{array}{l}
\frac{\partial}{\partial t}\tilde{\Psi}(t,s)=H_{\Gamma_0}(t)\tilde{\Psi}(t,s)-\beta(t)C(t)\Psi_{\Gamma_0}(t,s),
\\\\
\tilde{\Psi}(s,s)=0
\end{array}\right.
\end{align*}
for $0\le s\le t\le T$.
Then it is easy to check $\tilde{\Phi}$ and $\tilde{\Psi}$ satisfy (\ref{tildeP}). To derive the $G\hat{a}teaux$ derivative of $F$  using formulas  (\ref{F}) and (\ref{tildeP}), we get
\begin{align*}
\tilde{F}(t,s)=&\int_s^t\tilde{\Psi}(t,\theta)M_{\Gamma_0}(\theta)\Phi_{\Gamma_0}(\theta,s)d\theta+\int_s^t\Psi_{\Gamma_0}(t,\theta)\tilde{M}(\theta)\Phi_{\Gamma_0}(\theta,s)d\theta\\
&+\int_s^t\Psi_{\Gamma_0}(t,\theta)M_{\Gamma_0}(\theta)\tilde{\Phi}(\theta,s)d\theta\\
=&-\int_s^t\int_\theta^t\beta(r)C(r)\Psi_{\Gamma_0}(t,\theta)drM_{\Gamma_0}(\theta)\Phi_{\Gamma_0}(\theta,s)d\theta\\
&+\int_s^t\Psi_{\Gamma_0}(t,\theta)D(\theta)\beta(\theta)\Phi_{\Gamma_0}(\theta,s)d\theta\\
&-\int_s^t\Psi_{\Gamma_0}(t,\theta)M_{\Gamma_0}(\theta)\int_\theta^t\beta(r)\big(C(r)+D(r)\big)\Phi_{\Gamma_0}(t,\theta)drd\theta\\
:=&\int_s^tF_1(t,s,\theta)\beta(\theta)d\theta,
\end{align*}
where we denoted by
\begin{align*}
F_1(t,s,\theta)=&-\int_s^\theta C(\theta)\Psi_{\Gamma_0}(t,r)M_{\Gamma_0}(r)\Phi_{\Gamma_0}(r,s)dr\\
&+\Psi_{\Gamma_0}(t,\theta)D(\theta)\Phi_{\Gamma_0}(\theta,s)\\
&-\int_s^\theta\Psi_{\Gamma_0}(t,r)M_{\Gamma_0}(r)\big(C(\theta)+D(\theta)\big)\Phi_{\Gamma_0}(t,r)dr.
\end{align*}
\end{proof}
In the next lemma we derive the $G\hat{a}teaux$ derivative of $K$. 

\begin{lemma}
The $G\hat{a}teaux$ derivative of $K$ at $\Gamma_0$ in the direction $\beta$ is given by
\begin{align}\label{tildek}
\tilde{K}(u,t)=\int_0^t K_2(u,t,s)\beta(s)ds,
\end{align}
where
\begin{align*}
&\frac{1}{2}K_2(u,t,s)\\&=\frac{1}{2}\int_0^sF_1(t,\theta,s)F_{\Gamma_0}(t,\theta)\Big(\bar{\sigma}_\theta^2+\Gamma_0(\theta)^2\bar{\gamma}_\theta^2\Big)d\theta\\
&\quad+\int_0^sF_1(t,\theta,s)\bar{\sigma}_\theta
\Psi_{\Gamma_0}(t,\theta)\Big(\sigma(u,\theta)+\Gamma_0(\theta)\gamma(u,\theta)\Big)d\theta\\
&\quad+\int_0^s\Psi_1(t,\theta,s)\Psi_{\Gamma_0}(t,\theta)\Big(\sigma(u,\theta)^2+\Gamma_0(\theta)^2\gamma(u,\theta)^2\Big)d\theta\\
&\quad+\int_0^s\Psi_1(t,\theta,s)F_{\Gamma_0}(t,\theta)\bar{\sigma}_\theta\Big(\sigma(u,\theta)+\Gamma(\theta)\gamma(u,\theta)\Big)d\theta\\
&\quad+\Big(F_{\Gamma_0}(t,s)^2\Gamma_0(s)\Bar{\gamma}_s^2+\Psi_{\Gamma_0}(t,s)^2\Gamma_0(s)\gamma(u,s)^2+\frac{1}{2}\Psi_{\Gamma_0}(t,s)F_{\Gamma_0}(t,s)\bar{\sigma}_s\gamma(u,s)\Big).
\end{align*}
\end{lemma}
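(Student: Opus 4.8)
The plan is to differentiate, term by term, the explicit representation of $K(u,t)$ produced by the lemma on the error covariance, feeding in the $G\hat{a}teaux$ derivatives $\tilde{\Psi}$ and $\tilde{F}$ (and, through $F_\Gamma$, $\tilde{\Phi}$) already computed in the preceding lemma. Specialised to the scalar case with $Q=Q_0=1$, that representation exhibits $K(u,t)$ as a single $s$-integral over $[0,t]$ of a quadratic form in the two quantities $F_{\Gamma_0}(t,s)$ and $\Psi_{\Gamma_0}(t,s)$ together with the pointwise factors $\Gamma_0(s)$ and $\Gamma_0(s)^2$, the coefficients being built from $\sigma(u,s),\gamma(u,s),\bar{\sigma}_s,\bar{\gamma}_s$. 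Hence $\Gamma$ enters $K$ in only two ways: through the transition objects $\Psi_\Gamma,F_\Gamma$, whose derivatives are supplied in the form $\tilde{\Psi}(t,s)=\int_s^t\Psi_1(t,s,\theta)\beta(\theta)\,d\theta$ and $\tilde{F}(t,s)=\int_s^t F_1(t,s,\theta)\beta(\theta)\,d\theta$, and through the explicit factors $\Gamma_0(s)$, whose $G\hat{a}teaux$ derivative in the direction $\beta$ is simply $\beta(s)$.

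First I would justify differentiating under the $s$-integral: by the proof of the covariance lemma the operators $\Phi_\Gamma,\Psi_\Gamma$ are uniformly bounded on $\{0\le s\le t\le T\}$ and depend on $\Gamma$ in a locally Lipschitz way, while (A2) gives $\sigma(u,\cdot),\gamma(u,\cdot)\in L^2$ and $A,B,C,D$ locally integrable, and $\Gamma_0,\beta\in L^\infty$; these bounds provide an $s$-integrable majorant for the difference quotients $\varepsilon^{-1}\big(K_{\Gamma_0+\varepsilon\beta}(u,t)-K_{\Gamma_0}(u,t)\big)$, uniformly in small $\varepsilon$, so dominated convergence lets the limit pass inside. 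Then I would apply the Leibniz rule to each quadratic summand. Every occurrence of $\tilde{\Psi}(t,s)$ or $\tilde{F}(t,s)$ becomes a nested integral $\int_s^t(\cdots)\beta(\theta)\,d\theta$, and the resulting double integrals $\int_0^t ds\int_s^t d\theta$ are rearranged by Fubini over the triangle $\{0\le s\le\theta\le t\}$ into $\int_0^t d\theta\int_0^\theta ds$; renaming the outer variable back to $s$ and the inner one to $\theta$ converts them into contributions of the form $\int_0^t\beta(s)\big(\int_0^s(\cdots)(t,\theta,s)\,d\theta\big)ds$, which are exactly the $\int_0^s$-terms appearing in the claimed $K_2$. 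The Leibniz pieces in which a pointwise factor $\Gamma_0(s)$ or $\Gamma_0(s)^2$ is the one being differentiated contribute directly terms $\int_0^t(\cdots)(t,s)\,\beta(s)\,ds$, and these assemble into the last parenthesised line of the stated formula for $\tfrac12 K_2$.

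Collecting all summands and substituting the explicit forms of $\Psi_1$ and $F_1$ from the preceding lemma then yields $\tilde{K}(u,t)=\int_0^t K_2(u,t,s)\beta(s)\,ds$ with the asserted $K_2$; the factor $\tfrac12$ in front of $K_2$ (and the $\tfrac12$'s inside the displayed expression) is a normalisation that symmetrises the coincident cross-terms in $\Psi_{\Gamma_0}F_{\Gamma_0}$ arising from the transpose-symmetric structure $K_1+K_1'$. I expect the only genuine obstacle to be the bookkeeping: there are several quadratic summands, most splitting into two or three Leibniz pieces, and for the pieces carrying $\tilde{F}$ one must also carry the nested integral defining $F_1$ (itself the outcome of a Fubini interchange in the previous lemma), so three interlocking layers of integration variables and their limits must be tracked at once; keeping the order of the arguments $(t,\theta,s)$ in $\Psi_1$ and $F_1$ matched to the correct outer factor, and getting the limits right in each Fubini swap, is where the care lies, after which the algebra is entirely routine.
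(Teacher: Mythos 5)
Your proposal is correct and follows essentially the same route as the paper: the paper's proof of this lemma is exactly the ``straightforward computation'' you describe, namely differentiating the integral representation of $K$ term by term using the $G\hat{a}teaux$ derivatives $\tilde{\Phi},\tilde{\Psi},\tilde{F}$ from the preceding lemma together with the explicit $\Gamma_0(s)$ factors, and then rearranging the nested integrals by Fubini to reach the form $\int_0^t K_2(u,t,s)\beta(s)\,ds$. Your write-up in fact supplies the bookkeeping and the justification for differentiating under the integral that the paper leaves implicit.
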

\begin{proof}
The proof is similar with Lemma 2. It follows directly from straightforward computation by using the $G\hat{a}teaux$ derivative of $\Phi, \Psi$ and $F$.
\end{proof}
Using the representation of the $G\hat{a}teaux$ derivative of $K$, we can get the  $G\hat{a}teaux$ derivative of the cost function $J$. Namely, we have the following:
\begin{theorem}
The $G\hat{a}teaux$ derivative of $J$ at $\Gamma_0$ in the direction $\beta$ is
\begin{align*}
\tilde{J}=\frac{dJ\Big(\Gamma_0(\cdot)+\varepsilon \beta(\cdot)\Big)}{d\varepsilon}\Bigg|_{\varepsilon=0}=\int_0^T\int_t^T\Sigma(s)\bar{K}_2(s,t)ds\beta(t)dt
\end{align*}
with
\begin{align*}
&\frac{1}{2}\bar{K}_2(t,s)
=\frac{1}{2}\int_{\mathbf{R}}K_2(u,t,s)
\mu_0(du)\\
&=\int_0^sF_1(t,\theta,s)F_{\Gamma_0}(t,\theta)\Big(\bar{\sigma}_\theta^2+\Gamma_0(\theta)^2\bar{\gamma}_\theta^2\Big)d\theta\\
&\quad+\int_0^sF_1(t,\theta,s)\bar{\sigma}_\theta
\Psi_{\Gamma_0}(t,\theta)\Big(\bar{\sigma}_\theta+\Gamma_0(\theta)\bar{\gamma}_\theta\Big)d\theta\\
&\quad+\int_0^s\Psi_1(t,\theta,s)\Psi_{\Gamma_0}(t,\theta)\Big(\overline{\sigma^2}(\theta)+\Gamma_0(\theta)^2\overline{\gamma^2}(\theta)\Big)d\theta\\
&\quad+\int_0^s\Psi_1(t,\theta,s)F_{\Gamma_0}(t,\theta)\bar{\sigma}_\theta\Big(\bar{\sigma}_\theta+\Gamma(\theta)\bar{\gamma}_\theta\Big)d\theta\\
&\quad+\Big(F_{\Gamma_0}(t,s)^2\Gamma_0(s)\Bar{\gamma}_s^2+\Psi_{\Gamma_0}(t,s)^2\Gamma_0(s)\overline{\gamma^2}(s)+\frac{1}{2}\Psi_{\Gamma_0}(t,s)F_{\Gamma_0}(t,s)\bar{\sigma}_s\bar{\gamma}_s\Big).
\end{align*}
\end{theorem}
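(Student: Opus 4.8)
The plan is to differentiate the cost functional $J(\Gamma)=\int_{\mathbf{R}}\int_0^T \Sigma(t)K(u,t)\,dt\,\mu_0(du)$ along the ray $\Gamma_0+\varepsilon\beta$ and identify the linear functional in $\beta$ that results. Since $J$ is an integral against $\Sigma$ and $\mu_0$ of the scalar process $K(u,t)$, and differentiation under the integral sign is justified by the uniform bounds on $\Phi_{\Gamma_0},\Psi_{\Gamma_0}$ (hence on $F_{\Gamma_0}$ and all the quadratic expressions in the integrand of $K_1$) established in the proof of Lemma~1, the first step is simply to write
\begin{align*}
\tilde J=\frac{d}{d\varepsilon}J(\Gamma_0+\varepsilon\beta)\Big|_{\varepsilon=0}
=\int_{\mathbf{R}}\int_0^T \Sigma(t)\,\tilde K(u,t)\,dt\,\mu_0(du).
\end{align*}
Here $\tilde K(u,t)$ is exactly the G\^ateaux derivative of $K$ computed in Lemma~3, namely $\tilde K(u,t)=\int_0^t K_2(u,t,s)\beta(s)\,ds$.

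Next I would substitute this representation and interchange the order of integration. Plugging in Lemma~3 gives
\begin{align*}
\tilde J=\int_{\mathbf{R}}\int_0^T \Sigma(t)\int_0^t K_2(u,t,s)\beta(s)\,ds\,dt\,\mu_0(du).
\end{align*}
Fubini's theorem applies because all the kernels appearing in $K_2$ (products of $F_1,\Psi_1,F_{\Gamma_0},\Psi_{\Gamma_0}$ with $\bar\sigma,\bar\gamma,\sigma(u,\cdot),\gamma(u,\cdot)$ and $\Gamma_0$) are bounded in $t,s,\theta$ and integrable in $u$ against $\mu_0$ by assumption (A2), and $\Sigma\in L^1$. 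So I can pull the $u$-integral inside, which turns $K_2(u,t,s)$ into $\bar K_2(t,s)=\int_{\mathbf{R}}K_2(u,t,s)\,\mu_0(du)$; the explicit form of $\bar K_2$ is obtained termwise from the formula for $K_2$ in Lemma~3 by replacing $\sigma(u,\theta)$ with $\bar\sigma_\theta$, $\gamma(u,\theta)$ with $\bar\gamma_\theta$, $\sigma(u,\theta)^2$ with $\overline{\sigma^2}(\theta)$, and $\gamma(u,\theta)^2$ with $\overline{\gamma^2}(\theta)$ (note that $\overline{\sigma^2}\neq\bar\sigma^2$ in general, which is why the quadratic-in-$u$ terms keep the bar-inside-the-square notation). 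Then I swap the $s$ and $t$ integrals over the triangle $\{0\le s\le t\le T\}$, rewriting $\int_0^T\int_0^t(\cdots)\,ds\,dt=\int_0^T\Big(\int_t^T(\cdots)\,ds\Big)\beta(t)\,dt$ after relabeling, which yields precisely
\begin{align*}
\tilde J=\int_0^T\Big(\int_t^T \Sigma(s)\bar K_2(s,t)\,ds\Big)\beta(t)\,dt.
\end{align*}

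The only genuine content beyond bookkeeping is verifying that the formal differentiation under the double integral is legitimate and that Lemma~3's $K_2$ integrates against $\mu_0$ to the claimed $\bar K_2$; both reduce to the dominated-convergence / Fubini estimates already implicit in Lemma~1, so the main obstacle is really notational --- keeping the eight-plus terms of $K_2$ in correct correspondence with their $\mu_0$-averaged versions and making sure the quadratic terms in $\sigma,\gamma$ are averaged as $\overline{\sigma^2},\overline{\gamma^2}$ rather than $\bar\sigma^2,\bar\gamma^2$. I would therefore carry out the proof by first stating the differentiation-under-the-integral justification in one line, then exhibiting the interchange of integration and the averaging termwise, and finally reading off the coefficient of $\beta(t)$; this last coefficient is the Fr\'echet gradient whose vanishing will give the necessary condition for optimality of $\Gamma_0$.
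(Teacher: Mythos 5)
Your proposal is correct and follows essentially the same route as the paper: differentiate under the integral, insert the representation $\tilde K(u,t)=\int_0^t K_2(u,t,s)\beta(s)\,ds$ from Lemma~3, average in $u$ against $\mu_0$ termwise (correctly keeping $\overline{\sigma^2},\overline{\gamma^2}$ rather than $\bar\sigma^2,\bar\gamma^2$), and swap the integrals over the triangle $\{0\le s\le t\le T\}$ to read off the coefficient of $\beta(t)$. The only difference is that you spell out the dominated-convergence/Fubini justification, which the paper leaves implicit.
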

\begin{proof}
For 1-dimensional case, the cost function has the form
\begin{align*}
J(\Gamma)=\int_{\mathbf{R}}\int_0^T\Sigma(t)K(u,t)dt\mu_0(du).
\end{align*}
Using the $G\hat{a}teaux$ derivative of $K$ (\ref{tildek}), the $G\hat{a}teaux$ derivative is directly given by
\begin{align*}
\Tilde{J}=&\int_{\mathbf{R}}\int_0^T\Sigma(t)\tilde{K}(u,t)dt\mu_0(du)\\
=&\int_{\mathbf{R}}\int_0^T\Sigma(t)\tilde{K}(u,t)dt\mu_0(du)\\
=&\int_{\mathbf{R}}\int_0^T\int_0^t\Sigma(t)K_2(u,t,s)\beta(s)dsdt\mu_0(du)\\
=&\int_0^T\int_t^T\Sigma(s)\bar{K}_2(s,t)ds\beta(t)dt,
\end{align*}
where
\begin{align*}
&\frac{1}{2}\bar{K}_2(t,s)=\frac{1}{2}\int_{\mathbf{R}}K_2(u,t,s)
\mu_0(du)\\
&=\int_0^sF_1(t,\theta,s)F_{\Gamma_0}(t,\theta)\Big(\bar{\sigma}_\theta^2+\Gamma_0(\theta)^2\bar{\gamma}_\theta^2\Big)d\theta\\
&\quad+\int_0^sF_1(t,\theta,s)\bar{\sigma}_\theta
\Psi_{\Gamma_0}(t,\theta)\Big(\bar{\sigma}_\theta+\Gamma_0(\theta)\bar{\gamma}_\theta\Big)d\theta\\
&\quad+\int_0^s\Psi_1(t,\theta,s)\Psi_{\Gamma_0}(t,\theta)\Big(\overline{\sigma^2}(\theta)+\Gamma_0(\theta)^2\overline{\gamma^2}(\theta)\Big)d\theta\\
&\quad+\int_0^s\Psi_1(t,\theta,s)F_{\Gamma_0}(t,\theta)\bar{\sigma}_\theta\Big(\bar{\sigma}_\theta+\Gamma(\theta)\bar{\gamma}_\theta\Big)d\theta\\
&\quad+\Big(F_{\Gamma_0}(t,s)^2\Gamma_0(s)\Bar{\gamma}_s^2+\Psi_{\Gamma_0}(t,s)^2\Gamma_0(s)\overline{\gamma^2}(s)+\frac{1}{2}\Psi_{\Gamma_0}(t,s)F_{\Gamma_0}(t,s)\bar{\sigma}_s\bar{\gamma}_s\Big).
\end{align*}
\end{proof}
Let us assume that $\Gamma_0$ is the optimal control, then we have
\begin{align*}
\tilde{J}=\int_0^T\int_t^T\Sigma(s)\bar{K}_2(s,t)ds\beta(t)dt\ge 0
\end{align*}
for any $\beta\in L^\infty\big([0,T], \mathbf{R}\big)$. Since $\beta$ is arbitrary, from this we get $\int_t^T\Sigma(s)\bar{K}_2(s,t)ds=0$ for all $t\in[0,T].$ This gives us possibility to get the sufficient conditions for the optimal filter:
\begin{theorem}
Assume that $\Gamma_0$ is the optimal control for the control system (\ref{state}), (\ref{cost}), then the following system of equations hold:
\begin{align*}
    \left\{\begin{array}{l}
H_{\Gamma_0}(t)=A(t)-\Gamma_0(t)C(t),\\\\
M_{\Gamma_0}(t)=B(t)-\Gamma_0(t)D(t),\\\\
\Phi_{\Gamma_0}(t,s)=exp\{\int_s^t H_{\Gamma_0}(r)+M_{\Gamma_0}(r)dr\},\\\\
\Psi_{\Gamma_0}(t,s)=exp\{\int_s^t H_{\Gamma_0}(r)dr\},
\\\\
F_{\Gamma_0}(t,s)=\int_s^t\Psi_{\Gamma_0}(t,\theta)M_{\Gamma_0}(\theta)\Phi_{\Gamma_0}(\theta,s)d\theta,\\\\
\Phi_1(t,s,\theta)=-\big(C(\theta)+D(\theta)\big)\Phi_{\Gamma_0}(t,s),\\\\
\Psi_1(t,s,\theta)=-C(\theta)\Psi_{\Gamma_0}(t,s),\\\\
F_1(t,s,\theta)=-\int_s^\theta C(\theta)\Psi_{\Gamma_0}(t,r)M_{\Gamma_0}(r)\Phi_{\Gamma_0}(r,s)dr\\\\
\qquad\qquad\qquad+\Psi_{\Gamma_0}(t,\theta)D(\theta)\Phi_{\Gamma_0}(\theta,s)\\\\
\qquad\qquad\qquad-\int_s^\theta\Psi_{\Gamma_0}(t,r)M_{\Gamma_0}(r)\big(C(\theta)+D(\theta)\big)\Phi_{\Gamma_0}(t,r)dr,\\\\
\frac{1}{2}\bar{K}_2(t,s)\
=\int_0^sF_1(t,\theta,s)F_{\Gamma_0}(t,\theta)\Big(\bar{\sigma}_\theta^2+\Gamma_0(\theta)^2\bar{\gamma}_\theta^2\Big)d\theta\\\\
\qquad\qquad\quad\quad+\int_0^sF_1(t,\theta,s)\bar{\sigma}_\theta
\Psi_{\Gamma_0}(t,\theta)\Big(\bar{\sigma}_\theta+\Gamma_0(\theta)\bar{\gamma}_\theta\Big)d\theta\\\\
\qquad\qquad\quad\quad+\int_0^s\Psi_1(t,\theta,s)\Psi_{\Gamma_0}(t,\theta)\Big(\overline{\sigma^2}(\theta)+\Gamma_0(\theta)^2\overline{\gamma^2}(\theta)\Big)d\theta\\\\
\qquad\qquad\quad\quad+\int_0^s\Psi_1(t,\theta,s)F_{\Gamma_0}(t,\theta)\bar{\sigma}_\theta\Big(\bar{\sigma}_\theta+\Gamma_0(\theta)\bar{\gamma}_\theta\Big)d\theta\\\\
\quad\qquad\qquad\quad+F_{\Gamma_0}(t,s)^2\Gamma_0(s)\Bar{\gamma}_s^2+\Psi_{\Gamma_0}(t,s)^2\Gamma_0(s)\overline{\gamma^2}(s)\\\\\qquad\qquad\quad\quad+\frac{1}{2}\Psi_{\Gamma_0}(t,s)F_{\Gamma_0}(t,s)\bar{\sigma}_s\bar{\gamma}_s,\\\\
\int_t^T\Sigma(s)\bar{K}_2(s,t)ds=0,\quad \forall t\in[0,T].
\end{array}\right.
\end{align*}
At the same time, the optimal filter is given by
\begin{align*}
    \left\{\begin{array}{l}
dz(u,t)=\left(\big(A(t)-\Gamma_0C(t)\big)z(u,t)+\big(B(t)-\Gamma_0D(t)\big)\bar{z}_t\right)dt+\Gamma_0(t)dy(u,t),\\
z(u,0)=u.
\end{array}\right.
\end{align*}
\end{theorem}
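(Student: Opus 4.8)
The plan is to notice that the long display in the statement separates into two qualitatively different parts. The first eight groups of identities — those for $H_{\Gamma_0},M_{\Gamma_0}$, for the transition operators $\Phi_{\Gamma_0},\Psi_{\Gamma_0}$, for $F_{\Gamma_0}$, and for $\Phi_1,\Psi_1,F_1$ — together with the closed formula for $\frac{1}{2}\bar{K}_2(t,s)$, are not new assertions at all: they are exactly the definitions and representations already obtained, specialized to the point $\Gamma=\Gamma_0$. Concretely, $H_{\Gamma_0},M_{\Gamma_0}$ come from (\ref{hm}); $\Phi_{\Gamma_0},\Psi_{\Gamma_0}$ from their defining ODEs (with the exponential form recorded in the 1-dimensional reduction); $F_{\Gamma_0}$ from (\ref{F}); $\Phi_1,\Psi_1,F_1$ from Lemma~2; and $\bar{K}_2$ from Theorem~4 (which itself rests on Lemma~3). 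So the only genuinely new content to establish is the orthogonality relation $\int_t^T\Sigma(s)\bar{K}_2(s,t)\,ds=0$ for all $t\in[0,T]$, and the identification of the optimal filter.

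For the orthogonality relation I would argue as follows. Because the admissible set $L^\infty([0,T],\mathbf{R})$ is a linear space, for every $\beta\in L^\infty([0,T],\mathbf{R})$ the perturbed control $\Gamma_0+\varepsilon\beta$ is admissible for all $\varepsilon\in\mathbf{R}$, and $\varepsilon\mapsto J(\Gamma_0+\varepsilon\beta)$ is differentiable at $\varepsilon=0$ with derivative $\tilde{J}$ given by Theorem~4. Optimality of $\Gamma_0$ forces this scalar function of $\varepsilon$ to attain a minimum at $\varepsilon=0$, so $\tilde{J}=0$; one does not even need a one-sided argument, since $\tilde{J}$ is linear in $\beta$ and replacing $\beta$ by $-\beta$ is free. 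Hence
\begin{align*}
\int_0^T\left(\int_t^T\Sigma(s)\bar{K}_2(s,t)\,ds\right)\beta(t)\,dt=0\qquad\text{for every }\beta\in L^\infty([0,T],\mathbf{R}).
\end{align*}
By the fundamental lemma of the calculus of variations the inner bracket vanishes for a.e.\ $t$; and since $\Sigma\in L^1$ while $\bar{K}_2$ is continuous in its arguments — being assembled from the uniformly bounded transition operators $\Phi_{\Gamma_0},\Psi_{\Gamma_0}$, from $F_{\Gamma_0}$ via (\ref{F})--(\ref{parF}), and from the kernels of Lemma~2, all acting on the $L^2$ data of (A2) — the map $t\mapsto\int_t^T\Sigma(s)\bar{K}_2(s,t)\,ds$ is continuous, so the identity holds for every $t\in[0,T]$.

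The form of the optimal filter is then immediate: inserting the optimal $\Gamma_0$ into (\ref{z}) and using (\ref{hm}) to replace $H(t)$ by $H_{\Gamma_0}(t)=A(t)-\Gamma_0(t)C(t)$ and $M(t)$ by $M_{\Gamma_0}(t)=B(t)-\Gamma_0(t)D(t)$ — the choice that was made precisely to force unbiasedness — yields exactly the stated equation for $z$.

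I do not expect a deep obstacle here, since the analytic work was front-loaded into Lemmas~1--3 and Theorem~4. The two points that need care are: (i) the passage from ``$\tilde{J}=0$ for all $\beta\in L^\infty$'' to the \emph{pointwise} (not merely a.e.) vanishing of $\int_t^T\Sigma(s)\bar{K}_2(s,t)\,ds$, which relies on the regularity of $\Sigma$ and of the kernel $\bar{K}_2$ noted above; and (ii), more pedantically, the legitimacy of interchanging $\int_{\mathbf{R}}\,\mu_0(du)$ with $d/d\varepsilon$ in passing from $\tilde{K}$ to $\tilde{J}$, which is justified by the uniform sup-bounds on $\Phi_\Gamma,\Psi_\Gamma$ established in the proof of Lemma~1 together with the integrability hypotheses (A2).
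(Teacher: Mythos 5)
Your proposal is correct and follows essentially the same route as the paper: the bulk of the system is indeed just the accumulated definitions and the kernels from Lemmas~2--3 and Theorem~4, and the only new content is the first-order condition, which the paper likewise obtains from $\tilde{J}\ge 0$ for arbitrary $\beta\in L^\infty\big([0,T],\mathbf{R}\big)$ (hence $\tilde{J}=0$ by the sign flip $\beta\mapsto-\beta$), yielding $\int_t^T\Sigma(s)\bar{K}_2(s,t)\,ds=0$, after which the filter equation is just (\ref{z}) with (\ref{hm}) evaluated at $\Gamma_0$. Your added care about upgrading the a.e.\ conclusion of the fundamental lemma to all $t\in[0,T]$ via continuity of the kernel is a refinement the paper leaves implicit, not a different argument.
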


\begin{example}
Let $\sigma(u,t)=\gamma(u,t)=u$ and $\mu_0$ be standard normal distribution, namely, $\mu_0(\Delta)=\int_\Delta (2\pi)^{-\frac{1}{2}}e^{-\frac{x^2}{2}}dx$. In this case, $\bar{\sigma}_\theta=\bar{\gamma}_\theta=0$ and $\overline{\sigma^2}_\theta=\overline{\gamma^2}_\theta=1$. So that 
\begin{align*}
\frac{1}{2}\bar{K}_2(t,s)\
=-\int_0^sC(s)\Psi_{\Gamma_0}(t,\theta)^2\Big(1+\Gamma_0(\theta)^2\Big)d\theta
+\Psi_{\Gamma_0}(t,s)^2\Gamma_0(s).
\end{align*}
Let $\bar{K}_2(t,s)=0$ and using $\Psi(t,s)\Psi(s,\theta)=\Psi(t,\theta),\,\forall \theta\le s\le t$, the $\Gamma_0$ is given by
\begin{align*}
\Gamma_0(s)=\int_0^sC(s)\Psi_{\Gamma_0}(s,\theta)^2\Big(1+\Gamma_0(\theta)^2\Big)d\theta,\quad \forall s\in[0,T].
\end{align*}
Let $M(s):=\frac{\Gamma_0(s)}{C(s)}$, through straightforward computation, we have
\begin{align*}
M'(s)=1+C^2(s)M^2(s)+2H_{\Gamma_0}(s)M(s).
\end{align*}
On the other hand, $\bar{K}(t)=\int_{\mathbf{R}} e^2(u,t)\mu_0(du)$ is given by
\begin{align*}
\bar{K}'(t)=1+C^2(t)M^2(t)+2H_{\Gamma_0}(t)\bar{K}(t),
\end{align*}
which shows $\bar{K}(t)=M(t)$.
\end{example}

~\\

\begin{example}
Let $B(t)=D(t)=0$, $\mu_0=\delta_{x_0}$, $\sigma(u,t)=\sigma_0(t), \gamma(u,t)=\gamma_0(t)$. Then the filtering problem transforms to a classical Kalman filtering problem. In this case, $M_\Gamma=F_\Gamma=0$ and 
\begin{align*}
\frac{1}{2}\bar{K}_2(t,s)=&-\int_0^s\Psi_{\Gamma_0}(t,s)C(s)\Psi_{\Gamma_0}(s,\theta)\Psi_{\Gamma_0}(t,\theta)\Big(\sigma_0^2(\theta)+\Gamma_0(\theta)^2\gamma_0^2(\theta)\Big)d\theta\\
&+\Psi_{\Gamma_0}(t,s)^2\Gamma_0(s)\gamma_0^2(s).
\end{align*}
Using the property $\Psi(t,s)\Psi(s,\theta)=\Psi(t,\theta),\,\forall \theta\le s\le t$, let $\Bar{K}_2(t,s)=0$, we conclude
\begin{align*}
\int_0^s \Psi_{\Gamma_0}^2(s,\theta)\left(\sigma_0^2(\theta)+\Gamma_0^2(\theta)\gamma_0^2(\theta)\right)d\theta+\frac{\Gamma_0(s)\gamma_0^2(s)}{C(s)}=0,\,\forall s\in[0,T].
\end{align*}
Let $S(t)=\frac{\gamma_0^2(t)}{C(t)}\Gamma_0(t).$ Thus, we obtain
\begin{align*}
S'(t)=2A(t)S(t)-\frac{C^2(t)}{\gamma_0^2(t)}S^2(t)+\sigma_0^2(t),\quad  S(0)=0
\end{align*}
and the optimal filter is given by
\begin{align*}
dz_0(t)=\left(A(t)-\frac{C^2(t)}{\gamma_0^2(t)}S(t)\right)z_0(t)dt+\frac{C(t)}{\gamma_0^2(t)}S(t)dy_0(t),
\end{align*}
which are similar to the classical results.
\end{example}

\section{Funding and conflict of interest}
This work was supported by National Key R$\&$D Program of China (Grant numbers 2023YFA1009200) and the National Science Foundation of China (Grant numbers 12201241). The author Yuecai Han has received research support from the first funding and Kateryna Hlyniana has received research support from the second funding. There is no conflict of interest.

\bibliography{references}
\end{document}